\documentclass[reqno, 12pt, reqno]{amsart}

\usepackage{amsfonts, amsthm, amsmath, amssymb}

\usepackage{hyperref}
\usepackage{a4wide}

\RequirePackage{mathrsfs} \let\mathcal\mathscr

\DeclareRobustCommand{\SkipTocEntry}[5]{} 

\numberwithin{equation}{section}

\newtheorem{theorem}{Theorem}[section]
\newtheorem{lemma}[theorem]{Lemma}

\theoremstyle{definition}

\newtheorem*{rem*}{Remark}

\renewcommand{\rho}{\varrho}

\newcommand{\ZZ}{\mathbb{Z}}
\newcommand{\NN}{\mathbb{N}}

\newcommand{\CC}{\mathbb{C}}

\renewcommand{\leq}{\leqslant}
\renewcommand{\geq}{\geqslant}

\newcommand{\Mod}[1]{\;(\operatorname{mod}\,#1)}

\newcommand{\id}{\rm{id}}

\DeclareMathOperator{\lcm}{lcm}
\newcommand{\eps}{\varepsilon}

\begin{document}

\title[On the factorisation theorem for polynomial orbits]
{A consequence of the factorisation theorem for polynomial orbits on nilmanifolds 
\\ -- \\ Corrigendum to {\em Acta Arith.} {\bf 154} (2012), 235--306}
\author{Lilian Matthiesen}
\address{}
\email{matthiesen@math.uni-hannover.de}

\begin{abstract}
We discuss a consequence of Green and Tao's factorisation theorem for polynomial 
orbits on nilmanifolds, adjusted to the requirements of certain arithmetic 
applications.
More precisely, we prove a generalisation of
Theorem 16.4, {\em Acta Arith.} {\bf 154} (2012), 235--306, by slightly 
rearranging its proof.
The thus achieved strengthening of the result removes an oversight in the 
above-cited paper which resulted from the previously too weak conclusion.
Since this type of result proved essential for further applications, we 
take the opportunity to discuss it in more detail.
\end{abstract}

\maketitle

\section{Introduction}

The quantitative factorisation theorem for polynomial orbits on nilmanifolds that 
was proved in Green--Tao \cite{GT-polyorbits} plays a fundamental role in 
applications of Green and Tao's `nilpotent Hardy--Littlewood method', the 
machinery behind their celebrated work \cite{GT-linearprimes}.
The theorem allows one to factorise an arbitrary polynomial sequence $g$ on a 
nilmanifold as a product $\eps g' \gamma$ of three polynomial sequences, where 
$\eps$ is slowly varying, $g'$ is highly equidistributed and $\gamma$ is periodic.
It is usually the equidistribution properties of the sequence $g'$ one seeks to 
exploit.
In order to access these one splits the range of $n$ into subprogressions on which 
$\eps$ is almost constant and on which $\gamma$ is constant.
For this approach to work it is crucial that $g'$ is still equidistributed when 
restricted to these subprogressions.

The nilpotent Hardy--Littlewood method can be used to study correlations of 
arithmetic functions; in Green and Tao's case the function studied is the von 
Mangoldt function. 
It is usually desirable that the relevant arithmetic function, $f$ say, has the 
same average value on each of the new subprogressions as on the original 
(possibly $W$-tricked) range.
We will refer to this property as the major arc property.

The purpose of this paper is to discuss a consequence of Green and Tao's 
factorisation theorem that is suitable for specific arithmetic applications such 
as questions involving a function $f$ that counts the number of representations of 
an integer by a form or a polynomial, c.f. \cite{lmr} and \cite{bm}.
In particular, we are interested in functions $f$ whose average value in 
arithmetic progressions is determined by a product of local densities.
The problem one faces in such applications is the following:

The product of local densities will typically remain constant when the $p$-adic 
densities of elements within two arithmetic progressions is the same.
In other words, if $\{n \equiv r_1 \Mod{q_1}\}$ and $\{n \equiv r_2 \Mod{q_2}\}$ 
are two progressions such that $p|q_1 \Leftrightarrow p|q_2$ and such that
$r_1 \equiv r_2 \Mod{\lcm(q_1,q_2)}$, then the average value of $f$ restricted to 
either progression is expected to be the same.
This information allows us to employ a $W$-trick to obtain a major arc property as 
follows. 
Let $W$ be a product of small prime powers.
Then the average value of $f$ is preserved when passing from a 
progression of the form $\{W n + A\}$, $A \not\equiv 0 \Mod{p^{v_p(W)}}$ for 
any $p|W$, to a subprogression of the form $\{W(qn+r)+A\}$ where $q$ is entirely 
composed of primes dividing $W$.

When applying the factorisation theorem it is necessary to ensure that the 
periodic sequence $\gamma$ has a period that belongs to the set of integers $q$ 
from above.
The only way to do so in the setting of the original factorisation theorem is to 
ensure that the upper bound, $Q$, for the period of $\gamma$ does not exceed the 
largest prime factor dividing $W$.
This however produces at the same time an upper bound of the shape $Q^{A}$, for 
some fixed $A\geq 1$ that we are free to choose, on the common difference of 
subprogressions on which $g'$ is still equidistributed.

In an arithmetic application with polynomial structure one is however naturally 
lead to situations where one would like to know that for instance
$n \mapsto g'(aW^{\ell} n +b)$ for some fixed ${\ell} \in \NN$ is still 
equidistributed to some extent.
Since $A$ is fixed whereas $W=W(N)$ usually grows with $N$, this is difficult to 
achieve with the original factorisation theorem.
More precisely, while $q=aW^{\ell}$ would be a valid choice for the major 
arc condition provided $q=a$ is, it is impossible to simultaneously guarantee 
that $W^{\ell} < Q^{A}$ holds and that $Q \leq P^+(W)$ is bounded above by the 
largest prime factor of $W$.

To surpass this difficulty, a modified factorisation theorem was established in 
\cite[\S16]{lmr}.
While the chosen strategy of proof is sufficiently powerful, it has unfortunately 
been overlooked that in the form the result is stated not all requirements are 
met; specifically, \cite[Theorem 16.4]{lmr} does not guarantee that $g'$ is 
equidistributed on the subprogressions on which $\gamma$ is constant.
By going through the proof slightly more carefully, we obtain in Theorem 
\ref{l:factorisation} below a generalisation of the result which resolves the 
issue.

\section{A new factorisation lemma for polynomial nilsequences}
To start with, we recall the statement of the factorisation theorem from
Green and Tao \cite[Thm 1.19]{GT-polyorbits}:

\begin{theorem}\label{t:GT}
Let $m, d > 0$, and let $M_0 , N > 1$ and $A > 0$ be real numbers. 
Suppose that $G/\Gamma$ is an $m$-dimensional nilmanifold together with a
filtration $G_{\bullet}$ of degree $d$. 
Suppose that $X$ is an $M_0$-rational Mal'cev basis $\mathcal{X}$ adapted to
$G_{\bullet}$ and that $g \in \mathrm{poly}(\ZZ, G_{\bullet})$. 
Then there is an integer $M$ with $M_0 \leq M \ll M_0^{O_{A,m,d}(1)}$,
a rational subgroup $G' \subseteq G$, a Mal'cev basis $\mathcal{X}'$ for 
$G'/\Gamma'$ in which each element is an $M$-rational combination of the elements 
of $\mathcal{X}$, and a decomposition $g = \eps g' \gamma$ into
polynomial sequences $\eps, g', \gamma \in \mathrm{poly}(\ZZ, G_{\bullet})$ with 
the following properties:
\begin{enumerate}
 \item $\eps : \ZZ \to G$ is $(M, N)$-smooth;
 \item $g': \ZZ \to G'$ takes values in $G'$, and the finite sequence 
$(g′(n)\Gamma')_{n \leq N}$ is totally $1/M^A$-equidistributed in $G'/\Gamma'$, 
using the metric $d_{\mathcal{X}'}$ on $G'/\Gamma'$;
 \item $\gamma : \ZZ \to G$ is $M$-rational, and $(\gamma(n)\Gamma)_{n \in \ZZ}$ 
is periodic with period at most $M$.
\end{enumerate} 
\end{theorem}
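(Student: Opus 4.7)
The plan is to establish this by induction on the dimension $m$ of $G/\Gamma$, driven at each step by a dichotomy between equidistribution and structure. At the start of each iteration I test whether the current polynomial sequence $g$ is already totally $1/M^A$-equidistributed in $G/\Gamma$ for a suitable value of $M$; if so, I stop with the trivial decomposition $\eps = \gamma = \id$ and $g' = g$. If not, the quantitative Leibman-type theorem of \cite{GT-polyorbits} produces a nontrivial horizontal character $\eta : G \to \RR/\ZZ$ of Lipschitz norm polynomially bounded in $M$ such that the smoothness norm $\|\eta \circ g\|_{C^\infty[N]}$ is very small. A Dirichlet-type argument applied to the coefficients of the real polynomial $\eta \circ g$ then splits it as $\eta \circ g \equiv \eps_\eta + \gamma_\eta \pmod{1}$, with $\eps_\eta$ having small derivatives and $\gamma_\eta(n)$ taking rational values of denominator polynomially bounded in $M$, hence periodic in $n$.

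The second step is to lift this scalar decomposition back to $G$ via the Mal'cev coordinates associated to $\mathcal{X}$: I would extract from $g$ a smooth factor $\eps_0$ and an $M^{O(1)}$-rational periodic factor $\gamma_0$ along the Lie-algebra direction dual to $\eta$, obtaining $g = \eps_0 g_1 \gamma_0$ with $g_1 \in \mathrm{poly}(\ZZ, G_{\bullet})$ now taking values in the codimension-one rational subgroup $G^{(1)} := \ker(\eta)$. The polynomial-sequence calculus of \cite{GT-polyorbits}, in particular the Lazard-type expansion in basic polynomials adapted to $G_{\bullet}$, is what permits this factorisation to respect the filtration. I would then endow $G^{(1)}/(G^{(1)}\cap\Gamma)$ with an $M^{O(1)}$-rational Mal'cev basis $\mathcal{X}^{(1)}$ derived from $\mathcal{X}$ via the rational subgroup machinery of \cite{GT-polyorbits}, and recurse on this $(m-1)$-dimensional nilmanifold with sequence $g_1$. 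Since $m$ is fixed, the induction terminates after at most $m$ steps, and the accumulated smooth and rational factors combine into the final $\eps$ and $\gamma$.

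The main obstacle is the quantitative bookkeeping. Each iteration degrades the parameter $M$ to some $M^{O_{A,m,d}(1)}$, and one has to verify that after at most $m$ iterations the final constant still satisfies $M \ll M_0^{O_{A,m,d}(1)}$. One also needs to check that products of $(M,N)$-smooth sequences remain $(M^{O(1)},N)$-smooth, that products of $M$-rational periodic sequences remain $M^{O(1)}$-rational and periodic, and, most delicately, that the iteratively constructed Mal'cev basis $\mathcal{X}'$ is expressible as an $M$-rational combination of the elements of the original basis $\mathcal{X}$ — a property which is not automatic from the recursive construction and has to be tracked through each intersection with $\ker(\eta)$. Provided these estimates are carried out carefully, as in \cite{GT-polyorbits}, the induction closes with the stated bounds on $\eps$, $g'$, $\gamma$, $G'$, $\mathcal{X}'$, and $M$.
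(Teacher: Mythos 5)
This statement is not proved in the paper at all; Theorem~\ref{t:GT} is presented verbatim as a recall of Green and Tao's factorisation theorem, \cite[Theorem~1.19]{GT-polyorbits}, and is used as a black box in the remainder of the article (in Lemma~\ref{l:G=G'} and in the proof of Theorem~\ref{l:factorisation}). There is therefore no ``paper's own proof'' against which to compare your attempt.

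As a sketch of the argument given by Green and Tao in their original paper, your outline captures the correct overall architecture: induction on the dimension $m$, driven by the quantitative Leibman dichotomy (either $g$ is already equidistributed, or the ``Theorem 2.9'' in \cite{GT-polyorbits} furnishes a nontrivial horizontal character $\eta$ of polynomially bounded height whose composition with $g$ has small $C^\infty[N]$-norm); a Vinogradov/Dirichlet-type splitting of the coefficients of the real polynomial $\eta\circ g$ into a slowly varying part and a rational part; lifting this splitting to $G$ via Mal'cev coordinates so as to peel off an $(M,N)$-smooth factor $\eps_0$ and an $M$-rational periodic factor $\gamma_0$, leaving a residual polynomial sequence $g_1$ valued in the codimension-one rational subgroup $\ker(\eta)$; and finally recursing on $\ker(\eta)$ with the induced $M^{O(1)}$-rational Mal'cev basis. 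You are also right that the serious work is the quantitative bookkeeping: composing at most $m$ degradations $M\mapsto M^{O_{A,m,d}(1)}$, verifying that products of smooth (resp.\ rational periodic) sequences remain smooth (resp.\ rational periodic) with controlled parameters, and tracking the rationality of the iteratively constructed basis $\mathcal{X}'$. Two small cautions if you want to flesh this out: Green and Tao obtain \emph{total} equidistribution of $g'$ not merely by testing plain equidistribution at each step but by invoking a total-equidistribution variant of the Leibman theorem, and the reduction to $\ker(\eta)$ requires a separate argument to show $\ker(\eta)$ is an $M^{O(1)}$-rational subgroup and admits a compatible filtration $G_\bullet\cap\ker(\eta)$ of the same degree. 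For the full details you should consult \cite[\S\S 7--9]{GT-polyorbits} directly rather than the present corrigendum, which only builds on the theorem.
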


We will employ this result in an iterative process.
To guarantee the termination of this process, we will ensure that in each 
of our applications of the above result the rational subgroup $G'$ will be of 
strictly lower dimension than that of the ambient group $G$.

\begin{lemma}\label{l:G=G'}
Under the hypotheses of Theorem \ref{t:GT} let 
$g \in \mathrm{poly}(\ZZ, G_{\bullet})$ and suppose that 
$g(n)= \eps(n) g'(n) \gamma(n)$ is a factorisation which satisfies the 
conditions (1)--(3).
Then there is a positive constant $C$ only depending on $m$ and $d$ such that 
whenever $A$ is sufficiently large and $G'=G$, then 
$g$ is totally $M^{-A/2C}$-equidistributed.
\end{lemma}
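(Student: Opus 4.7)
My plan is to unpack the definition of total $M^{-A/2C}$-equidistribution of $g$: for every arithmetic progression $P\subseteq[N]$ with $|P|\geq M^{-A/2C}N$ and every Lipschitz $F\colon G/\Gamma\to\CC$ with $\|F\|_{\mathrm{Lip}}\leq 1$ and $\int_{G/\Gamma}F=0$, show that $|P|^{-1}|\sum_{n\in P}F(g(n)\Gamma)|\leq M^{-A/2C}$. I would decompose $P$ into short sub-APs on which both the periodic part $\gamma$ and the smooth part $\eps$ are essentially constant, then on each piece reduce to the total equidistribution hypothesis on $g'$.

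Since $\gamma$ has period $q_\gamma\leq M$, I would first partition $P$ into the $q_\gamma$ residue sub-APs $P_r=\{n\in P:n\equiv r\pmod{q_\gamma}\}$, on each of which $\gamma(n)\Gamma$ equals a fixed coset $\gamma_r\Gamma$ (with $\gamma_r$ an $M$-rational representative). The $(M,N)$-smoothness of $\eps$ means $d_G(\eps(n),\eps(n'))\leq M|n-n'|/N$, so partitioning each $P_r$ into sub-APs $Q$ of length $\leq \sigma N/M$, for a parameter $\sigma>0$ to be chosen, allows one to fix $\eps_Q\in G$ with $d_G(\eps(n),\eps_Q)\leq\sigma$ throughout $Q$. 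There are $O(M^2/\sigma)$ such pieces in total, each an AP in $[N]$ of length roughly $|P|\sigma/M^2$.

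On a given piece $Q$, the Lipschitz property of $F$ together with the local Lipschitz bounds for left/right multiplication in $G/\Gamma$ (the relevant multiplying elements $\eps_Q$ and $\gamma_r$ having size $\leq M^{O(1)}$) yields
\[
F(g(n)\Gamma) = F\bigl(\eps_Q g'(n)\gamma_r\Gamma\bigr) + O(\sigma M^{O(1)})
\]
uniformly on $Q$. The function $\Psi_Q(y):=F(\eps_Q y\gamma_r\Gamma)$ is Lipschitz on $G$ with norm $M^{O(1)}$, and since its right invariance group on the left-hand side $G$ is $\gamma_r\Gamma\gamma_r^{-1}$, it descends to a zero-mean Lipschitz function on $G/\Gamma_r$, where $\Gamma_r:=\Gamma\cap\gamma_r\Gamma\gamma_r^{-1}$ has index $[\Gamma:\Gamma_r]\leq M^{O(1)}$ in $\Gamma$ by the $M$-rationality of $\gamma_r$.

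The main obstacle is then to bound $|Q|^{-1}|\sum_{n\in Q}\Psi_Q(g'(n)\Gamma_r)|$ using only the hypothesis that $g'(n)\Gamma$ is totally $M^{-A}$-equidistributed in $G/\Gamma$, because $\Psi_Q$ naturally lives on the finite cover $G/\Gamma_r$ rather than on $G/\Gamma$. I would handle this by the standard transfer principle: total $\delta$-equidistribution in $G/\Gamma$ implies total $\delta\cdot[\Gamma:\Gamma_r]^{O(1)}$-equidistribution in $G/\Gamma_r$, verified by lifting horizontal characters through the Mal'cev basis $\mathcal{X}'$ and testing. Combining this with the smoothing error, summing over the $O(M^2/\sigma)$ pieces $Q$, and choosing $\sigma=M^{-A/2C}$ with the constant $C=C(m,d)$ large enough to absorb every polynomial-in-$M$ factor arising from the Lipschitz bounds on $\Psi_Q$, the index $[\Gamma:\Gamma_r]$, and the transfer to the cover, gives the required bound $M^{-A/2C}$ and hence the lemma.
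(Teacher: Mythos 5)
Your decomposition strategy is the same as the paper's: partition $P$ into sub-APs on which the $M$-periodic sequence $\gamma$ is constant, further subdivide into pieces on which the $(M,N)$-smooth sequence $\eps$ is essentially constant, freeze both, and reduce to the total equidistribution of $g'$. The place where you diverge is the treatment of the frozen rational factor $\gamma_r$: you fold it into the test function $\Psi_Q(y)=F(\eps_Qy\gamma_r\Gamma)$, which naturally lives on the finite cover $G/\Gamma_r$ with $\Gamma_r=\Gamma\cap\gamma_r\Gamma\gamma_r^{-1}$, and then appeal to a ``standard transfer principle'' from $G/\Gamma$ to $G/\Gamma_r$. The paper instead keeps everything over $G/\Gamma$: it observes that total $M^{-A}$-equidistribution of $(g'(n)\Gamma)$ implies total $M^{-A/C'}$-equidistribution of the sequence $(g'(n)\gamma_r\Gamma)$ in $G/\Gamma$, citing Proposition~14.3 of \cite{lmr} (a consequence of Theorem~2.9 of \cite{GT-polyorbits}), and then averages $H_Q(h)=F(\eps(s_Q)h\Gamma)$ against this sequence. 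The two routes encode the same underlying horizontal-character computation, but the paper's has the advantage of being a direct citation.

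Your version is not wrong in spirit, but it is not complete as written. First, the transfer principle you invoke is not a named, citable result; one would have to prove it via the quantitative Leibman equivalence: a failure of $\delta'$-equidistribution in $G/\Gamma_r$ produces a non-trivial horizontal character $\eta$ of $G/\Gamma_r$ with $|\eta|,\ \|\eta\circ g'\|_{C^\infty[N]}\ll(\delta')^{-O(1)}$; multiplying by $[\Gamma:\Gamma_r]$ yields a horizontal character of $G/\Gamma$, which then contradicts the $M^{-A}$-equidistribution of $g'$. Carrying this out also requires fixing an $M^{O(1)}$-rational Mal'cev basis for $G/\Gamma_r$ (the basis $\mathcal{X}'$ you mention is adapted to $\Gamma$, not $\Gamma_r$), which is possible since $\gamma_r$ is $M$-rational but should be stated. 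Second, the quantitative form you assert — ``total $\delta$-equidistribution in $G/\Gamma$ implies total $\delta\cdot[\Gamma:\Gamma_r]^{O(1)}$-equidistribution in $G/\Gamma_r$'' — is not the right shape; running the horizontal-character argument above gives an implication of the form $\delta\mapsto(\delta[\Gamma:\Gamma_r]^{O(1)})^{1/O(1)}$, i.e.\ you lose a power of $\delta$ in addition to the polynomial-in-index factor. This does not hurt the final conclusion (with $\delta=M^{-A}$ and $[\Gamma:\Gamma_r]\leq M^{O(1)}$ one still ends up with $M^{-A/O(1)}$), but the bound should be corrected and, more importantly, the transfer should be established rather than merely asserted as standard.
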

\begin{proof}
 Let $C \geq 1$ to be determined in the course of the proof and let
$P\subseteq\{1, \dots, N\}$ be a progression of length at least $M^{-A/2C}N$.
Since $\gamma$ is periodic with period bounded above by $M$, we may split $P$ 
into at most $M$ subprogressions, each of length at least $M^{-(A/(2C) + 1)}N$, 
on which $\gamma$ is constant.
Next, we split each of these subprogressions into pieces of diameter between
$M^{-((A/(2C) + 1)}N$ and $2M^{-(A/(2C) + 1)}N$ and let $\mathcal{P}$ denote the 
collection of all resulting bounded diameter pieces.
For each progression $Q \in \mathcal{P}$, let $s_{Q}$ denote its smallest 
element.
If $F: G/\Gamma \to \CC$ is a Lipschitz function, then the right-invariance of 
the metric $d_{\mathcal{X}}$ (c.f. \cite[Definition 2.2]{GT-polyorbits}) implies 
for any $n,n'$ that belong to the same element $Q$ of $\mathcal{P}$ that:
\begin{align*}
|F(\eps(n) g'(n) \gamma(n) \Gamma)
- F(\eps(n') g'(n) \gamma(n) \Gamma)|
&\leq \|F\|_{\mathrm{Lip}}~ 
d_{\mathcal{X}}(\eps(n) g'(n) \gamma(n),\eps(n') g'(n) \gamma(n)) \\
&= \|F\|_{\mathrm{Lip}}~ d_{\mathcal{X}}(\eps(n),\eps(n') ) \\
&\leq \|F\|_{\mathrm{Lip}}~ |n-n'|M/N \\
&\leq 2\|F\|_{\mathrm{Lip}}~ M^{-A/(2C)}.
\end{align*}
This estimate allows one to fix for any $Q \in \mathcal{P}$ the contribution of 
$\eps$:
$$
\sum_{n\in Q} F(g(n)\Gamma) 
= \sum_{n\in Q} F(\eps(s_{Q}) g'(n) \gamma(n) \Gamma)
 + O(\#Q \|F\|_{\mathrm{Lip}} M^{-A/(2C)}).
$$
Let $H_Q: G/\Gamma \to \CC$ denote the map $H_Q(h) := F(\eps(s_{Q}) h \Gamma)$ 
and observe that the approximate left-invariance of $d_{\mathcal{X}}$ (c.f. 
\cite[Lemma A.5]{GT-polyorbits}) implies that 
$\|H_Q\|_{\mathrm{Lip}} \leq M_0^{O(1)} \|F\|_{\mathrm{Lip}}$. 
Furthermore we have $\int_{G/\Gamma} F = \int_{G/\Gamma} H_Q$. 
The fact that $(g'(n)\Gamma)_{n\leq N}$ is totally 
$M^{-A}$-equidistributed in $G/\Gamma$ allows us to deduce a similar property 
for each of the sequences $(g'(n) \gamma(m)\Gamma)_{n\leq N}$ for fixed $m$.
It follows from \cite[Proposition 14.3]{lmr}, which is a consequence of
\cite[Theorem 2.9]{GT-polyorbits}, that there is a constant 
$C' = BB' > 1$, only depending on $m$ and $d$, such that 
$(g'(n)\gamma(m)\Gamma)_{n\leq N}$ is totally $M^{-A/C}$-equidistributed.
We set $C=C'$.
Applying the above to any progression $Q \in \mathcal{P}$, we obtain
\begin{align*}
\sum_{n\in Q} F(\eps(s_{Q}) g'(n) \gamma(n) \Gamma)
&= \sum_{n\in Q} F(\eps(s_{Q}) g'(n) \gamma(n_Q) \Gamma)\\
&= \left( \int_{G/\Gamma} F + 
   O \left(M_0^{O(1)}M^{-A/C} \|F\|_{\mathrm{Lip}}\right) \right)\#Q,
\end{align*}
and, hence,
$$
\sum_{n\in N} F(g(n)\Gamma)
= N \left( \int_{G/\Gamma} F + \|F\|_{\mathrm{Lip}} 
 O\left(M^{-A/(2C)}+M_0^{O(1)}M^{-A/C}\right) 
\right).
$$
This completes the proof.
\end{proof}

\begin{theorem}[Factorisation lemma] \label{l:factorisation}
Let $N$ and $T=T(N)$ be positive integer parameters that satisfy
$N^{1-\eps} \ll_{\eps} T \ll N$ and let $k:\NN \to \NN$ be a slowly 
growing function.
Let $m$, $d$, $B$, $E$ and $Q_0 \geq 1$ be positive integers.
Suppose that $G/\Gamma$ is an $m$-dimensional nilmanifold together with a
filtration $G_{\bullet}$ of degree $d$. 
Suppose that $\mathcal X$ is a $Q_0$-rational Mal'cev basis
adapted to $G_{\bullet}$, and that $g \in \mathrm{poly}(\ZZ,G_{\bullet})$.
Suppose further that $Q_0 \leq \log k(N)$. 
Let $R=R(N)$ be a parameter that satisfies $R \geq Q_0$ and 
$R(N)^t \ll_t N$ for all $t>0$.
Then there is an integer $Q$ with 
$Q_0 \leq Q \ll Q_0^{O_{B,m,d}(1)}$
and a partition of $\{1, \dots, T\}$ into at most 
$R^{O_{m,d,B,E}(1)}$ disjoint subprogressions $P$, 
each of $k(N)$-smooth common difference 
$q(P) \ll R^{O_{m,d,B,E}(1)}$
and each of length $T/q(P) + O(1)$, 
such that the restriction $(g(n))_{n \in P}$ of $g$ to any of the 
progressions $P$ can be factorised as follows.

There is a rational subgroup $G' \leq G$, depending on $P$, and 
a Mal'cev basis $\mathcal X'$ for $G'\Gamma/\Gamma$ such that 
every element of $\mathcal X'$ is a $Q$-rational combination of elements
from $\mathcal X$ (that is, each coefficient is rational of height
bounded by $Q$).
Suppose $P= \{qn+r: 1 \leq n \leq T/q + O(1)\}$, where $q=q(P)$, then we have a 
factorisation
$$g(q n + r) = \eps_P(n) g'_P(n) \gamma_P(n)~,$$
where $\eps_P, g'_P, \gamma_P$ are polynomial
sequences from $\mathrm{poly}(\ZZ, G_{\bullet})$ with the properties
\begin{enumerate}
 \item $\eps_P: \ZZ \to G$ is $(Q,T/q)$-smooth\footnote{The notion of 
 smoothness was defined in \cite[Def.\ 1.18]{GT-polyorbits}. 
 A sequence $(\eps (n))_{n\in \ZZ}$ is said to be \emph{$(M,N)$-smooth} if 
 both $d_{\mathcal{X}}(\eps(n),\id_G) \leq N$ and 
 $d_{\mathcal{X}}(\eps(n),\eps(n-1)) \leq M/N$ hold 
 for all $1 \leq n \leq N$.};
 \item $\gamma_P: \ZZ \to G$ arises as the product of at most $m$ 
 $Q$-rational\footnote{A sequence $\gamma: \ZZ \to G$ is said to be 
 \emph{$Q$-rational} if for each $n$ there is $0<r_n\leq Q$ such that 
$(\gamma(n))^{r_n} \in \Gamma$. 
 See \cite[Def.\ 1.17]{GT-polyorbits}.} polynomial sequences and the sequence 
$(\gamma_P(n)\Gamma)_{n \in \ZZ}$ is periodic with a 
 $k(N)$-smooth period $q_{\gamma_P} \leq Q$;
 \item
 $g'_P: \ZZ \to G'$ takes values in $G'$ and for each $k(N)$-smooth number 
$\tilde q < (q q_{\gamma_P}R)^{E}$ the finite sequence 
$(g'_P(\tilde q n)\Gamma')_{n \leq T/(q \tilde q)}$ is totally
$Q^{-B}$-equidistributed in $G'\Gamma/\Gamma$.
\end{enumerate}
\end{theorem}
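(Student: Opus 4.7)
My plan is to iterate Theorem \ref{t:GT}, using Lemma \ref{l:G=G'} to force a strict decrease in the dimension of the ambient rational subgroup at each non-terminal step. To begin I fix a decreasing sequence of exponents $A_1 > A_2 > \cdots > A_{m+1}$, each large enough in terms of the next to absorb the $A\mapsto A/C$ losses of \cite[Proposition 14.3]{lmr} arising from multiplication by a constant and from passing to dilated sequences $n \mapsto g^{(i)}(\tilde q n)$ with $\tilde q \leq R^{O(E)}$; the final $A_{m+1}$ is kept comfortably larger than $2CB$ so that the equidistribution error from Lemma \ref{l:G=G'} is always $\leq Q^{-B}$.

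At the initial step I apply Theorem \ref{t:GT} to $g$ on $\{1,\dots,T\}$ with exponent $A_1$, obtaining a factorisation $g = \eps^{(1)} g^{(1)} \gamma^{(1)}$ with $g^{(1)}: \ZZ \to G^{(1)}$ totally $M_1^{-A_1}$-equidistributed and $M_1 \ll Q_0^{O_{A_1,m,d}(1)}$. Since $Q_0 \leq \log k(N)$, one has $M_1 \ll (\log k(N))^{O(1)} \ll k(N)$, so $q_{\gamma^{(1)}} \leq M_1$ is automatically $k(N)$-smooth; the same observation will handle $k(N)$-smoothness of every period throughout the iteration.

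The iterative step at level $i\ge 1$ takes a factorisation $g(q_i n + r_i) = \eps^{(i)}(n) g^{(i)}(n) \gamma^{(i)}(n)$ on a subprogression of $k(N)$-smooth common difference $q_i$, with $g^{(i)}$ totally $M_i^{-A_i}$-equidistributed in $G^{(i)}/\Gamma^{(i)}$. For each $k(N)$-smooth $\tilde q < (q_i q_{\gamma^{(i)}} R)^E$ I apply Theorem \ref{t:GT} (with exponent $A_{i+1}$) to $n \mapsto g^{(i)}(\tilde q n)$; Lemma \ref{l:G=G'} then gives a dichotomy: either the resulting subgroup equals $G^{(i)}$, in which case $(g^{(i)}(\tilde q n))_n$ is $Q^{-B}$-equidistributed and condition (3) is verified for this $\tilde q$, or it is strictly smaller. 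If the first alternative holds for every $\tilde q$ in range, the current factorisation on $P = P_i$ is the output. Otherwise I pick a $\tilde q_{i+1}$ realising the second alternative, combine the resulting factorisation with the transported pieces $\eps^{(i)}(\tilde q_{i+1} n)$ and $\gamma^{(i)}(\tilde q_{i+1} n)$ to obtain a factorisation at level $i+1$ on $P_{i+1} = \{q_i \tilde q_{i+1} n + r_i\}$, and run the analogous process on each of the remaining $\tilde q_{i+1}-1$ residue classes modulo $\tilde q_{i+1}$. Strict dimension reduction forces termination within $m$ levels, and the bounds on the number of subprogressions and on $q(P)$ follow from $\prod_j \tilde q_j \leq R^{O_{m,d,B,E}(1)}$.

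The main technical obstacle is the combination step. By associativity the rewrite
\[
\eps^{(i)}(\tilde q_{i+1} n)\, g^{(i)}(\tilde q_{i+1} n)\, \gamma^{(i)}(\tilde q_{i+1} n)
= \bigl[\eps^{(i)}(\tilde q_{i+1} n)\, \tilde\eps(n)\bigr]\, g^{(i+1)}(n)\, \bigl[\tilde\gamma(n)\, \gamma^{(i)}(\tilde q_{i+1} n)\bigr]
\]
is clean, but I must verify that the combined smooth factor remains $(Q, T/(q_i \tilde q_{i+1}))$-smooth (the dilation by $\tilde q_{i+1}$ inflates the discrete Lipschitz constant of $\eps^{(i)}$), that the combined rational factor is a product of at most $m$ $Q$-rational polynomial sequences with $k(N)$-smooth period $\leq Q$, and that the equidistribution of $g^{(i+1)}$ in the new subgroup is preserved through the various dilations — for which the right-invariance argument used in the proof of Lemma \ref{l:G=G'} and \cite[Proposition 14.3]{lmr} are the key tools. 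Careful bookkeeping of the rationality constants $M_i$ and exponents $A_i$ across the $\leq m$ iterations ensures the final outputs satisfy $Q \ll Q_0^{O_{B,m,d}(1)}$ and $q(P) \ll R^{O_{m,d,B,E}(1)}$ with the advertised dependencies.
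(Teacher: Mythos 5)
Your high-level plan --- iterate Theorem \ref{t:GT} and invoke Lemma \ref{l:G=G'} to force a strict drop in the dimension of the ambient rational subgroup --- is indeed the engine of the paper's proof, and your remarks about bookkeeping the exponents $A_i$ and checking the smoothness/rationality of the combined $\eps$ and $\gamma$ factors through the relevant invariance properties of $d_{\mathcal X}$ are in the right spirit. But there is a genuine gap in the termination argument. When you locate a bad dilation and split into $\tilde q_{i+1}$ residue classes, the only class on which you actually know $g^{(i)}(\tilde q_{i+1}n + j)$ fails to be equidistributed is $j=0$: that is the one dilation you tested. On the remaining $\tilde q_{i+1}-1$ classes you have no failure to exploit, so Theorem \ref{t:GT} applied there may perfectly well return $G'=G^{(i)}$, with no dimension drop. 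When you then ``run the analogous process'' on such a class, a new bad dilation may appear; you split again, and again only one of the children is guaranteed to drop in dimension. The assertion that ``strict dimension reduction forces termination within $m$ levels'' is therefore unjustified: the recursion tree has paths along which the dimension never decreases, and the product $\prod_j \tilde q_j$ --- and hence the common difference $q(P)$ and the number of pieces --- is not bounded by $R^{O_{m,d,B,E}(1)}$.

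What you are missing is precisely the device the paper uses to propagate failure to every residue class before splitting. First, Lemma \ref{l:equi/totally-equi} converts failure of \emph{total} $Q_0^{-B}$-equidistribution of $(g(q_1 n))_n$ into failure of (plain) $Q_0^{-BC}$-equidistribution. Then \cite[Lemma 16.3]{lmr} --- the key lemma absent from your proposal --- shows that failure of $(g(q_1 n))_n$ implies failure of $(g(z_1 n + r_1))_n$ for \emph{every} $0 \leq r_1 < z_1$, where $z_1 := q_1^{d}$ and $d$ is the degree of the filtration. Because the failure now holds uniformly over all $z_1$ residue classes, Theorem \ref{t:GT} combined with Lemma \ref{l:G=G'} forces $\dim G_1 < \dim G$ on each of them, so every branch of the tree drops dimension at every level and the height is at most $m$. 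Splitting by $q_1^{d}$ rather than by $q_1$ is thus not cosmetic: it is what makes the tree finite with the advertised bounds. Until you incorporate the analogue of \cite[Lemma 16.3]{lmr} (and the total-to-plain equidistribution step of Lemma \ref{l:equi/totally-equi} that feeds into it), your proof does not establish either the bound on the number of subprogressions or the bound $q(P) \ll R^{O_{m,d,B,E}(1)}$.
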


The proof of the factorisation lemma makes use of the fact that a polynomial 
sequence that fails to be totally equidistributed also fails to be 
equidistributed when allowing polynomial changes in the equidistribution 
parameter.
This is made precise in \cite[Lemma 6.2]{lmm}, which we restate here for 
simplicity:
\begin{lemma}\label{l:equi/totally-equi}
 Let $N$ and $A$ be positive integers and let $\delta: \NN \to [0,1]$ be a 
 function that satisfies $\delta(x)^{-t} \ll_t x$ for all $t>0$.
 Suppose that $G$ has $\frac{1}{\delta(N)}$-rational Mal'cev basis adapted to the 
 filtration $G_{\bullet}$. 
 Suppose that $g \in \mathrm{poly}(\ZZ,G_{\bullet})$ is a polynomial sequence 
 such that $(g(n)\Gamma)_{n\leq N}$ is $\delta(N)^A$-equidistributed.
 Then there is  $1\leq B \ll_{d_G,m_G} 1$
 such that $(g(n)\Gamma)_{n\leq N}$ is totally 
 $\delta(N)^{A/B}$-equidistributed, provided $A/B>1$ and provided $N$ is 
 sufficiently large.
\end{lemma}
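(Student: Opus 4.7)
The plan is to iterate Theorem \ref{t:GT} at most $m$ times, each iteration either decreasing the dimension of the ambient subgroup in which the main factor lives, or terminating via Lemma \ref{l:G=G'}. Relative to the Green--Tao theorem itself, two additional features must be engineered here: the period of $\gamma_P$ has to be $k(N)$-smooth, and the main factor $g'_P$ must be totally equidistributed not only on the range of $n$ but on each sparse subsequence $\tilde q n$ for $k(N)$-smooth $\tilde q$ of polynomial size.

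I would first apply Theorem \ref{t:GT} to $g$ on $\{1,\dots,T\}$ with a parameter $A_1=A_1(B,E,m,d)$ chosen large enough to absorb every subsequent loss. This yields $g=\eps_1 g_1 \gamma_1$ with $g_1$ taking values in a rational subgroup $G_1\leq G$, totally $M_1^{-A_1}$-equidistributed, and $\gamma_1$ periodic of period $p_1\leq M_1$. Writing $p_1=p_1^{\mathrm{sm}}\, p_1^{\mathrm{rg}}$ as the product of its $k(N)$-smooth and $k(N)$-rough parts, I partition $\{1,\dots,T\}$ into residue classes modulo $p_1^{\mathrm{rg}}$; on each resulting subprogression $P_1=\{p_1^{\mathrm{rg}} n + r\}$, the restriction $\gamma_1(p_1^{\mathrm{rg}} n + r)$ becomes periodic in $n$ with $k(N)$-smooth period dividing $p_1^{\mathrm{sm}}\leq M_1$, while the restriction of $\eps_1$ is still $(M_1,T/p_1^{\mathrm{rg}})$-smooth up to a rescaling that keeps the smoothness constant at the level $Q:=M_1$.

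If $G_1=G$, then Lemma \ref{l:G=G'} upgrades the factorisation of $g$ itself on $\{1,\dots,T\}$ to total $M_1^{-A_1/(2C)}$-equidistribution, and \cite[Proposition 14.3]{lmr} transfers this to every sparse subsequence $\tilde q n + r'$ with $\tilde q<(p_1^{\mathrm{rg}} p_1^{\mathrm{sm}} R)^E$ at the cost of only a constant-power loss; for $A_1$ large enough relative to $B$ and $E$, this establishes condition (3). If instead $G_1\subsetneq G$, I would reapply Theorem \ref{t:GT} to $n\mapsto g_1(p_1^{\mathrm{rg}} n+r)$ inside $G_1/(G_1\cap\Gamma)$ using the adjusted Mal'cev basis, absorbing the new smooth factor into a cumulative $\eps_P$, the new rational factor into a further factor of $\gamma_P$, and the rough part of its period into a finer partition of $P_1$. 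Since the ambient subgroup strictly shrinks at each round, the dichotomy terminates after at most $m$ rounds; the product of all rough periods collected yields the common difference $q(P)\ll R^{O(1)}$, and the product of at most $m$ rational polynomial sequences forms $\gamma_P$ as in (2).

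The main obstacle is the parameter bookkeeping: the constants $A_1,\dots,A_m$ must be coupled so that, after up to $m$ invocations of Lemma \ref{l:G=G'} and \cite[Prop.\ 14.3]{lmr} and the descent to subsequences of step $\tilde q<(q\, q_{\gamma_P} R)^E$, the residual equidistribution parameter of the final main factor is still at most $Q^{-B}$. This forces $A_1$ to be a sufficiently large function of $B,E,m,d$, and uses the hypothesis $R^t\ll_t N$ to guarantee that the admissible range of $\tilde q$ is subpolynomial in $N$. The required bound $Q\ll Q_0^{O_{B,m,d}(1)}$ follows because each application of Theorem \ref{t:GT} inflates the rationality parameter by only a polynomial factor and the iteration is capped at $m$ steps. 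Lemma \ref{l:equi/totally-equi} enters at the intermediate rounds, where one needs to convert (non-total) equidistribution of an intermediate sequence on a subprogression into the total equidistribution hypothesis required by the next application of Theorem \ref{t:GT}.
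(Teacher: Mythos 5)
There is a fundamental mismatch here: what you have written is not a proof of Lemma \ref{l:equi/totally-equi} at all, but a sketch of the proof of Theorem \ref{l:factorisation} (the factorisation lemma). Your text is organised around iterating Theorem \ref{t:GT}, splitting off smooth and rough parts of the period of $\gamma$, descending through subgroups $G\supsetneq G_1\supsetneq\cdots$, and assembling $\eps_P$, $g'_P$, $\gamma_P$ --- none of which has any bearing on the statement assigned, which concerns a single polynomial sequence and asserts only that plain $\delta(N)^A$-equidistribution on $\{1,\dots,N\}$ can be upgraded to total $\delta(N)^{A/B}$-equidistribution with a constant power loss $B=O_{d_G,m_G}(1)$. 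Worse, your final sentence explicitly invokes Lemma \ref{l:equi/totally-equi} ``at the intermediate rounds'' as an ingredient, so with respect to the assigned statement the argument is circular: you assume exactly what you were asked to prove.

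For the record, the paper itself does not prove this lemma either; it is imported verbatim from \cite[Lemma 6.2]{lmm}. A genuine proof runs along quite different lines from anything in your sketch: one argues by contradiction, supposing that $(g(n)\Gamma)_{n\leq N}$ fails to be totally $\delta(N)^{A/B}$-equidistributed, so that some Lipschitz function with zero mean has a large average along a subprogression $P\subseteq\{1,\dots,N\}$ of length at least $\delta(N)^{A/B}N$. The quantitative Leibman theorem \cite[Theorem 2.9]{GT-polyorbits} then produces a nontrivial horizontal character $\eta$ of bounded modulus with $\|\eta\circ g\|_{C^{\infty}[N]}$ polynomially small in $\delta(N)^{A/B}$, and from such a character one builds an explicit Lipschitz obstruction (with controlled Lipschitz norm and zero mean) whose average over the \emph{full} range $\{1,\dots,N\}$ exceeds $\delta(N)^{A}$ once $B=B(d_G,m_G)$ is chosen suitably, $A/B>1$, and $N$ is large (this is where the hypothesis $\delta(x)^{-t}\ll_t x$ is used). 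This contradicts the assumed $\delta(N)^{A}$-equidistribution. If you want to prove the factorisation lemma, your outline is broadly in the spirit of the paper's proof of Theorem \ref{l:factorisation}; but as an answer to the question actually posed it contains no proof and leaves the entire content of Lemma \ref{l:equi/totally-equi} untouched.
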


\begin{proof}[Proof of Lemma \ref{l:factorisation} assuming Lemma 
\ref{l:equi/totally-equi}]
We may suppose that $g$ does not satisfy (3) with $Q$ replaced by $Q_0$. 
That is, there is some $k(N)$-smooth integer $q_1 \leq R^{E}$ such 
that $(g(q_1 n)\Gamma)_{n \leq T/q_1}$ fails to be totally
$Q_0^{-B}$-equidistributed.
By Lemma \ref{l:equi/totally-equi}, this sequence also fails to be 
$Q_0^{-BC}$-equidistributed for some $C = O_{m,d}(1)$.
Writing $z_1:=(q_1)^d$, we deduce from \cite[Lemma 16.3]{lmr} 
that each of the sequences $(g(z_1 n + r_1)\Gamma)_{n \leq T/z_1}$ with
$0 \leq r_1 < z_1$ fails to be $Q_0^{-BCC'}$-equidistributed in $G/\Gamma$ for 
some $C'=O_{m,d}(1)$.
Now, we run through all $0\leq r_1 < z_1$ in turn.

Applying Theorem \ref{t:GT} and Lemma \ref{l:G=G'} to any of these sequences 
yields some 
$Q_0 \leq Q_1 \ll Q_0^{O(B,m,d)}$, a $Q_1$-rational subgroup $G_1 < G$ of 
dimension strictly smaller than that of $G$, and a factorisation 
$$g(z_1 n + r_1) = \eps_{r_1}(n) g'_{r_1}(n)\gamma_{r_1}(n),$$
where the finite sequence $(g'_{r_1}(n) \Gamma_1)_{n\leq T/z_1}$ is totally
$Q_1^{-B}$-equidistributed in  $$G_1/\Gamma_1:=G_1/(\Gamma \cap G_1),$$
where $(\eps_{r_1}(n)\Gamma)_{n\in\ZZ}$ is $(Q_1, T/z_1)$-smooth, and
where $(\gamma_{r_1}(n)\Gamma)_{n\in\ZZ}$ is periodic with period at most $Q_1$.

If $g'_{r_1}$ is totally $Q_1^{-B}$-equidistributed on every subprogression 
$\{n \equiv 0 \Mod{q_2} \}$ of $k(N)$-smooth common difference 
$q_2< (z_1 Q_1 R)^{E}$, then we stop 
(and turn to the next choice of $r_1$). 
Otherwise, invoking Lemma \ref{l:equi/totally-equi} and \cite[Lemma 16.3]{lmr} 
again, there are positive integers $C, C' = O_{d,m}(1)$ and
a $k(N)$-smooth integer $q_2$ as above such that, with $z_2:=q_2^d$, the finite 
sequence $(g_{r_1,r_2}(n))_{n \leq T/(z_1z_2)}$ defined by 
$g_{r_1,r_2}(n) := g'_{r_1}(z_2 n + r_2)$ fails to be
$Q_1^{-BCC'}$-equidistributed for every $0 \leq r_2 < z_2$. 
We proceed as before.

This process yields a tree of operations which has height at most 
$m = \dim G$, since each time the factorisation theorem is applied a new
sequence $g'_{r_1, \dots, r_i}$ is found that takes values in some
strictly lower dimensional submanifold 
$G_i= G_i(r_1, \dots, r_i)$ of $G_{i-1}(r_1, \dots, r_{i-1})$. 
Thus, we can apply the factorisation theorem at most $m$ times in a row
before the manifold involved has dimension $0$. 

The tree we run through starts with $g$, which has $z_1$
neighbours $g_{r_1}$, one for each $0\leq r_1 < z_1$. 
For each $r_1$, the vertex $g_{r_1}$ has $z_2=z_2(r_1)$ neighbours 
$g_{r_1,r_2}$, one for each $0 \leq r_2 < z_2(r_1)$, etc.
As a result, we obtain a decomposition of the range 
$\{1, \dots, T\}$ into at most
$R^{O_{m,d,B,E}(1)}$ 
subprogressions of the form 
\begin{align*}
P
& = \{z_1(z_2(z_3( \dots (z_t n + r_t)
     \dots) + r_3) + r_2) + r_1 
    ~:~ n \leq T/(z_1z_2 \dots z_t) \} \\
& = \{ z_1z_2 \dots z_t n + r 
    ~:~ n \leq T/(z_1z_2 \dots z_t)\} ~,
\end{align*}
for $t \leq m$, some $r$, and where each $z_i$ depends on $r_1, \dots,r_{i-1}$.
The common difference of such a progression $P$ is
$k(N)$-smooth and bounded by 
$R^{O_{m,d,B,E}(1)}$. 
The iteration process furthermore yields a factorisation of 
$g_{r_1, \dots, r_t}$, which is the restriction of $g$ to $P$:
$$ g_{r_1,\dots,r_t} (m)
=g(z_1z_2 \dots z_t m + r) 
= \tilde \eps_{r_1,\dots,r_t}(m)
  g'_{t}(m)
  \tilde \gamma_{r_1,\dots,r_t}(m)~,
$$ 
where
$$\tilde \eps_{r_1,\dots,r_t}(m)
= \eps_{r_1}( z_2 \dots z_t m + \tilde r_2) \dots
  \eps_{r_1,\dots,r_{t-1}}(z_t m + \tilde r_t)
  \eps_{r_1,\dots,r_t}(m)
$$
for certain integers $\tilde r_2, \tilde r_3, \dots, \tilde r_t$,
and
$$ \tilde \gamma_{r_1,\dots,r_t}(m)
  =
  \gamma_{r_1,\dots,r_t}(m)
  \gamma_{r_1,\dots,r_{t-1}} (z_t m + \tilde r_t) \dots
  \gamma_{r_1}(z_2 \dots z_t m + \tilde r_2)~.
$$

The factor $\tilde \eps_{r_1,\dots,r_t}(m)$
is a $(Q_0^{O_{B,d,m}(1)},T/ (z_1 \dots z_t))$-smooth sequence.
This follows from the triangle inequality, the right-invariance and the 
approximate left-invariance of $d_{\mathcal{X}}$; we refer to the discussion 
following Definition 16.1 in \cite{lmr} for details and to 
\cite[App.~A]{GT-polyorbits} for the properties of $d_{\mathcal{X}}$.

Since each $\gamma_{r_1,\dots,r_{i}}$ with $i \leq t$ is periodic with 
period at most $Q_0^{O_{m,d,B}(1)}$ and since $t \leq m$, we deduce that
$\tilde \gamma_{r_1,\dots,r_t}$ is periodic with period at most 
$Q_0^{O_{m,d,B}(1)}$.
The bound $Q_0 \leq \log k(N)$ implies that this period is $k(N)$-smooth provided 
$N$ is sufficiently large.

Finally, $g'_t$ satisfies property (3) by construction.
\end{proof}

\end{document}